\documentclass
{article}
\usepackage[T2A]{fontenc}
\usepackage[cp1251]{inputenc}
\usepackage[english,russian]{babel}
\usepackage[tbtags]{amsmath}
\usepackage{amsfonts,amssymb,mathrsfs,amscd}
\usepackage{wasysym}
\usepackage{verbatim}
\usepackage{eucal}
\usepackage{upgreek}
\usepackage{graphicx}
\usepackage{enumerate}
\tolerance=3000

\usepackage[hyper]{msb-a}
\JournalName{}
\numberwithin{equation}{section}





\theoremstyle{plain}

\newtheorem{maintheorem}{Теорема-критерий}
\newtheorem{maincorol}{Следствие}
\newtheorem{lemma}{Лемма}

\theoremstyle{definition}
\newtheorem{proof}{Доказательство}
\newtheorem{remark}{Замечание}

\renewcommand{\leq}{\leqslant} 
\renewcommand{\geq}{\geqslant}
\newcommand{\RR}{\mathbb{R}} 
\newcommand{\CC}{\mathbb{C}} 
\newcommand{\NN}{\mathbb{N}} 

\DeclareMathOperator{\supp}{{\sf supp}}
\DeclareMathOperator{\dd}{\,{\mathrm d\!}}
\DeclareMathOperator{\sbh}{{\sf sbh}}  

\DeclareMathOperator{\rad}{{\text{\tiny \rm rd}}}

\DeclareMathOperator{\dsh}{{\sf dsh}}

\begin{document} 
\title{Интегралы  от разностей субгармонических  функций.~II. Один критерий}
	
\author[B.\,N.~Khabibullin]{Б.\,Н.~Хабибуллин}
\address{Башкирский государственный университет}
\email{khabib-bulat@mail.ru}

\date{28.06.2021}
\udk{517.547.2 : 517.574}

 \maketitle

\begin{fulltext}

\begin{abstract}  Получен критерий возможности оценки интегралов от произвольной разности субгармонических функций по мере  через их характеристику Неванлинны исключительно в терминах самой меры или потенциала от неё.  Эти результаты новые и для мероморфных функций.     

Библиография: 6  названий 

Ключевые слова: мероморфная функция, разность субгармонических функций, характеристика Неванлинны,  мера Рисса,  мера конечной энергии, полярное множество





\end{abstract}

\markright{Интегралы  от разностей субгармонических  функций. II \dots}

\section{Введение}
Используются обозначения, определения и соглашения первой части \cite{Kha21II_1} нашей работы, но по возможности мы их напоминаем. 

\subsection{Формулировка следствия для мероморфных функций}
 Сформулируем на\-иболее существенную часть результатов критерия для разности субгармонических функций из настоящей второй части нашей работы   только  для мероморфных функций в окрестности  замкнутого круга  $\overline D(R)\subset \CC$ с центром в нуле радиуса $R>0$, т.е. {\it на $\overline D(R)$,} которые тем более   справедливы для мероморфных функций на  $\CC$.  

Через  $\overline D_z(r):=z+\overline D(r)$ обозначаем замкнутые  круги с центром $z\in \CC$ радиуса $r\in \RR^+$, а для меры Бореля $\mu$ на $\CC$ полагаем 
\begin{equation}\label{muzr}
\mu^{\rad}_z(r):=\mu\bigl(\overline D_z(r)\bigr), \quad {\sf N}^{\mu}_z(r):=\int_0^r\frac{\mu^{\rad}_z(t)}{t}\dd t, \quad r\in \RR^+.
\end{equation}

\begin{maincorol}\label{mthm}
Пусть $0<r\in \RR^+$,  $\mu$ ---  мера Бореля на   $\overline D(r)\subset \CC$. 
Тогда следующие два утверждения эквивалентны: 
\begin{enumerate}[{\rm I.}]
\item\label{UIm} Существуют $r_0>0$ и $R>r$, для которых $\sup\limits_{z\in \overline D(R)}{\sf N}_z^{\mu}(r_0)<+\infty$.

\item\label{UIIm} При  любом $R>r$   для каждой мероморфной функции  $f\neq \infty$ на замкнутом круге 
$\overline D(R)$  функция $\ln^+|f|$  $\mu$-суммируема  и 
\begin{equation*}
\hspace{-2mm}\int_{\overline D(r)} \ln^+|f|\dd \mu \leq  
5\frac{R+r}{R-r}  \bigl(T(R,f)-N(r,f)\bigr)
\biggl(\mu\bigl(\overline D(r)\bigr)+\sup_{z\in \overline D(R)} {\sf N}_z^{\mu}(r)\biggr)<+\infty.
\end{equation*}
\end{enumerate}
\end{maincorol}
Здесь $T(R,f)$ --- классическая характеристика Неванлинны функции $f$, а в обозначении 
$n(r,f)$ для числа полюсов функции $f$ в $\overline D(r)$ 
$$
N(r,f):=\int_{0}^{r}\frac{n(t,f)-n(0,f)}{t}\dd t+n(0,f)\ln r 
$$  --- возрастающая  функция и  в первом неравенстве  утверждения \ref{UIIm}  в $N( r, f)$  можно заменить $r$ на любое    $r'\in [0,r]$ с сохранением конечности правой части при $r'\neq 0$. Кроме того,  в случае $r\geq 1$ или  $n(0,f)=0$, т.е. при $f(0)\in \CC$, имеем  $N( r, f)\geq 0$, и в этом неравенстве в таких случаях можно и убрать $-N( r, f)$ в скобках. Ввиду декларируемой в утверждении \ref{UIIm} конечности промежуточной части неравенства, утверждение  \ref{UIm} очевидным образом следует из утверждения \ref{UIIm} при выборе $r_0:=r$. Вывод утверждения \ref{UIIm} из \ref{UIm} --- следствие теоремы-критерия о пяти эквивалентных утверждениях, связанных с  интегралами от разности субгармонических функций по мере в шаре. Сформулирована и доказана эта теорема в \S~\ref{MTh}.

\subsection{Краткая сводка  некоторых определений}\label{Nch}

Через $B_x(r)$, $\overline B_x(r)$, а также $\partial \overline B_x(r)$ обозначаем  соответственно {\it открытый\/} и  {\it замкнутый шар,\/} а также  {\it  сферу радиуса $r\in \RR^+$  с центром\/} $x\in \RR^{\tt d}$, ${\tt d}\geq 2$.  В случае $x=0\in \RR^+$ нижний индекс $0$ не пишем. 

{\it Площади поверхностей единичных сфер\/} $\partial \overline  B(1)$ в $\RR^{\tt d}$ обозначаем через 
\begin{equation}\label{{kK}s}
s_{\tt d-1}=\frac{2\pi^{\tt d/2}}{\Gamma (\tt d/2)},  
\quad s_{\tt 1}=2\pi, \; s_{\tt 2}=4\pi,\;  s_{\tt 3}=\pi^2, \dots  . 
\end{equation}
Для  {\it поверхностной меры площади\/} $\sigma_{\tt d-1}^r$ на  $ \partial \overline B(r)\subset \RR^{\tt d}$ и интегрируемой по  $\sigma_{\tt d-1}^r$ функции $U\colon \partial \overline B(r)\to \overline \RR$  её {\it среднее по сфере $ \partial \overline B(r)$} обозначается как  
\begin{equation}\label{CU}
{\sf C}_U(r):=\frac{1}{s_{\tt d-1}r^{\tt d-1}}\int_{\partial \overline B(r)}U\dd \sigma_{\tt d-1}^r.
\end{equation}
Меры Бореля $\mu$, заданные  на борелевских подмножествах в $\RR^{\tt d}$, часто рассматриваем как продолженные на всё $\RR^{\tt d}$  и
\begin{equation}\label{muyr} 
\mu_y^{\rad}(t)\underset{t\in \RR^+}{:=}\mu\bigl(\overline B_y(t) \bigr)\in \overline \RR^+, 
\end{equation}
--- {\it радиальная считающая функция меры $\mu$.} 
В случае центра $y=0$ нижний индекс $0$ также  не используем. 
Неоднократно будет использоваться связанное с размерностью ${\tt d}\in \NN$ число 
\begin{equation}\label{kd0}
\widehat{\tt d}:=\max\{{\tt 1,d-2}\}=1+({\tt d-3})^+\in \NN.  
\end{equation}
Для меры Бореля $\mu$ на $\overline B(R)\subset \RR^{\tt d}$ 
\begin{equation}\label{Ntt}
{\sf N}_{\mu}(r,R):={\widehat{\tt d}}\int_r^R \frac{\mu^{\rad}(t)}{t^{\tt d-1}}\dd t\in \overline \RR^+ \quad\text{при  $ 0\leq r<R<R_0\in \overline \RR^+$}
\end{equation}
--- её {\it  радиальная разностная проинтегрированная считающая функция,} а 
 \begin{equation}\label{Nymu}
{\sf N}_y^{\mu}(r)\overset{\eqref{muzr}}{:=}\widehat{\tt d}\int_0^r \frac{\mu_y^{\rad}(t)}{t^{\tt d-1}}\dd t\in \overline \RR^{\tt d},
\quad r\in  \overline \RR^+, 
\end{equation}
--- её {\it  радиальная проинтегрированная считающая функция с центром $y\in \RR^{\tt d}$.}

Для $\delta$-суб\-г\-а\-р\-м\-о\-н\-и\-ч\-е\-с\-ких функций $U\not\equiv \pm\infty $ на {\it замкнутом шаре\/} 
$\overline B(R)\subset \RR^{\tt d}$ её {\it разностная  характеристика Неванлинны} определяется как 
\begin{equation}\label{{rT}N}
{\boldsymbol T}_U(r,R)\overset{\eqref{CU},\eqref{Ntt}}{=}{\sf C}_{U^+}(R)+{\sf N}_{\varDelta_U^-}(r,R),
\quad 0\leq r<R\in \RR^+,
\end{equation}
где мера $\varDelta_U^-\geq 0$ --- {\it нижняя вариация заряда Рисса\/} функции $U$.
В частности, для мероморфной функции $f\neq \infty$ на круге $\overline D(R)\subset \CC$ 
\begin{equation}\label{csTTNN}
T(R, f)-N(r, f)={\boldsymbol  T}_{\ln|f|}(r,R),
\quad 0< r<R\in {\mathbb{R}}^+.
\end{equation}

Для {\it связного\/} подмножества  $S\subset \RR^{\tt d}$ через $\sbh(S)$ обозначаем множество всех функций,  субгармонических на какой-либо области $D_u$, содержащей  $S$, или, другими словами, {\it на $S$.}
Подмножество $\sbh_*(S)\subset \sbh(S)$ состоит из всех функций, не равных тождественно  $-\infty$ в области $D_u\supset S$, для которых пишем $u\not\equiv -\infty$ на $S$.
 Класс $\dsh(S)$ {\it $\delta$-субгармонических функций на $S$} состоит
функций, заданных  как разность $U=u-v$ пары  функций $u, v\in \sbh(S)$, исключая разность функций, тождественно равных $-\infty$, а $\dsh_*(S)$  --- разности $U=u-v$ функций $u, v\in \sbh_*(S)$, для которых пишем 
$U\not\equiv \pm\infty$ на $S$. 

Расширенная  числовая функция 
\begin{equation}\label{kKd-2}
\Bbbk_{\tt d-2} \colon  t\underset{0<t\in \RR^+}{\longmapsto} \begin{cases}
\ln t  &\text{\it  при ${\tt d=2}$},\\
 -\dfrac{1}{t^{\tt d-2}} &\text{\it при ${\tt d>2}$,} 
\end{cases} 
\quad \Bbbk (0):=-\infty \in \overline \RR,
\end{equation}
очевидно,   строго  возрастающая и  непрерывная на $ \RR^+$. 

 Разностная характеристика Неванлинны ${\boldsymbol T}_U$ $\delta$-суб\-г\-а\-р\-м\-о\-н\-и\-ч\-е\-с\-к\-ой функции $U\not\equiv \pm \infty$ на шаре с центром в нуле положительна,
 возрастающая и выпуклая относительно $\Bbbk_{\tt d-2}$ по второй большей переменной, а также 
убывающая  и вогнутая относительно $\Bbbk_{\tt d-2}$ по  первой переменной. 

\section{Основной результат}\label{MTh}

\begin{maintheorem}\label{mth}
Пусть $0<r\in \RR^+$,  $\mu$ ---  мера Бореля на $\overline B(r)\subset \RR^{\tt d}$. 
Следующие пять   утверждений эквивалентны: 
\begin{enumerate}[{\rm I.}]
\item\label{UI}  В обозначении \eqref{Nymu} выполнено соотношение 
\begin{equation}\label{k0CR}
\sup_{y\in \overline B(R)}{\sf N}_y^{\mu}(r_0)<+\infty \quad\text{для некоторых  $r_0>0$ и $R>r$.}
\end{equation} 
\item\label{UII} Для любого $R>r$   каждая функция $U\in {\sf dsh}_*\bigl(\overline B(R)\bigr)$ $\mu$-суммируема  и 
\begin{subequations}\label{UR}
\begin{align}
\hspace{-3mm}\int_{\overline B(r)} U^+\dd \mu &\leq  A_{\tt d}(r,R){\boldsymbol  T}_U( r, R)\biggl(\mu^{\rad}(r)\max\{1, r^{2-\tt d}\} +\sup_{y\in \overline B(r)}{\sf N}_y^{\mu}(r)\biggr),
\tag{\ref{UR}T}\label{{UR}T}\\
\intertext{где правая  часть неравенства конечна и  
}
 A_{\tt d}(r,R)&:=5\max\bigl\{1, {\tt d}-2\bigr\}\Bigl(\frac{R+r}{R-r}\Bigr)^{\tt d-1}\max\Bigl\{1, (R-r)^{\tt d-2}\Bigr\},
\tag{\ref{UR}A}\label{{UR}A}
\end{align}
\end{subequations}
а первый аргумент $r$ в ${\boldsymbol  T}_U( r, R)$ можно заменить на любое    $r'\in [0,r]$.
\item\label{UIII} Существуют число $R>r$,  для которого  все  функции 
$U\in \dsh_*\bigl(\overline B(R)\bigr)$ $\mu$-интегрируемы, и такое число  $T>0$, что 
\begin{equation}\label{RM}
\sup\biggl\{\int_{\overline B(r)} U^+\dd \mu \biggm| {\boldsymbol  T}_U( r, R)\leq T,
\; U\in \dsh_*\bigl(\overline B(R)\bigr)  \biggr\}<+\infty.
\end{equation}

\item\label{UIV} Мера $\mu$ конечна, а её $\mu$-потенциал, обозначаемый и определяемый как  
\begin{equation}\label{Ukd}
{\sf pt}_{\mu}\colon x\underset{x\in \RR^{\tt d}}{\longmapsto}   \int_{\RR^{\tt d}} \Bbbk_{\tt d-2}\bigl(|y-x|\bigr)\dd \mu(y),
\end{equation}
ограничен снизу на носителе $\supp \mu\subset \overline B(r)$.

\item\label{UV} Мера $\mu$ конечна и 
\begin{equation}\label{k0CRr}
\sup_{y\in \supp \mu}{\sf N}_y^{\mu}(r_0)<+\infty \quad\text{для некоторого  $r_0>0$.}
\end{equation}
\end{enumerate}

\end{maintheorem}

\begin{proof}
При $\mu= 0$ всё очевидно, поэтому далее $\mu\neq 0$. 

Символ импликации $\overset{?}{\Longrightarrow}$ с вопросительным знаком над ним далее показывает, что ниже  доказывается именно эта  импликация.

\paragraph{{\rm \ref{UI}}$\overset{?}{\Longrightarrow}${\rm \ref{UII}}}
 Неоднократно будет использована следующая элементарная  
\begin{lemma}[{\cite[предложение 2.2]{KhaShm19}}]\label{prok}
Пусть   $0<r\in \RR^+$ и  $h\colon (0,r]\to \RR^+$ --- возрастающая функция. 
Если   сходится  интеграл Римана 
\begin{equation}\label{{se:fcc}0}  
\int_{0}^{r}\frac{h(t)}{t^{\tt d-1}}\dd t<+\infty,
\end{equation} 
то  существуют пределы  
\begin{equation}\label{es:00l} 
h(0):=\lim_{0<t\to 0} h(t)=0,
\quad \lim_{0<t\to 0} h(t)\Bbbk_{\tt d-2}( t)=0, 
\end{equation} 
а также сходится интеграл Римана\,--\,Стилтьеса 
\begin{equation}\label{ex:limf0} 
\int_0^{r}\Bbbk_{\tt d-2}(t) \dd h(t)>-\infty. 
\end{equation} 
Обратно, если   выполнено \eqref{ex:limf0},  то  существуют пределы 
\begin{equation}\label{es:00l0} 
h(0):=\lim_{0<t\to 0} h(t)\in \RR^+,
\quad \lim_{0<t\to 0}\bigl(h(t)-h(0)\bigr)\Bbbk_{\tt d-2}( t)=0 
\end{equation} 
и сходится интеграл 
\begin{equation}\label{f-f0}
\int_{0}^{r}\frac{h(t)-h(0)}{t^{\tt d-1}}\dd t<+\infty. 
\end{equation} 
При любом из  условий  \eqref{{se:fcc}0}  или  \eqref{ex:limf0}  имеет место  равенство 
\begin{equation}\label{in:poch0}
{\widehat{\tt d}}\int_{0}^{r}\frac{h(t)-h(0)}{t^{\tt d-1}}\dd t=
\int_{0}^{r}\bigl(\Bbbk_{\tt d-2}(r)-\Bbbk_{\tt d-2}(t)\bigr)\dd h(t),
\end{equation}
а  при условии  \eqref{{se:fcc}0}  и $r_0\in (0,r]$ выполняется  неравенство
\begin{equation}\label{hHmd}
{\widehat{\tt d}}\int_0^r\frac{h(t)}{t^{\tt d-1}}\dd t\leq h(r)
\bigl(\Bbbk_{\tt d-2}(r)-\Bbbk_{\tt d-2}(r_0)\bigr)+{\widehat{\tt d}}\int_0^{r_0}\frac{h(t)}{t^{\tt d-1}}\dd t.
\end{equation} 
\end{lemma}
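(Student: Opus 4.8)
The plan is to read the statement as a one-variable calculus lemma about the increasing function $h$ and its Lebesgue--Stieltjes measure $\dd h$ on $(0,r]$. Everything rests on the fact that, by \eqref{kKd-2},
\[
\Bbbk_{\tt d-2}'(t)=\widehat{\tt d}\,t^{1-\tt d}\qquad\text{for }0<t\in\RR^+,
\]
in \emph{both} cases ${\tt d}=2$ and ${\tt d}>2$; equivalently, $\dd\Bbbk_{\tt d-2}$ is absolutely continuous on each $[\varepsilon,r]$, $0<\varepsilon<r$, with density $\widehat{\tt d}\,t^{1-\tt d}$, so that $\widehat{\tt d}\int_\varepsilon^r\frac{h(t)}{t^{\tt d-1}}\dd t=\int_\varepsilon^r h\dd\Bbbk_{\tt d-2}$. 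Recall in addition that $\Bbbk_{\tt d-2}$ is increasing with $\Bbbk_{\tt d-2}(0^+)=-\infty$, that $\Bbbk_{\tt d-2}<0$ on $(0,1)$, and that an increasing $h\geq0$ automatically has a finite limit $h(0):=\lim_{0<t\to0}h(t)\in\RR^+$. The one genuine tool is Stieltjes integration by parts on a closed interval $[\varepsilon,r]$ bounded away from the origin, where $h$ is bounded and $\Bbbk_{\tt d-2}$ is continuous, so that no jump terms occur (taking the right-continuous representative of $h$, which changes nothing here):
\[
\int_\varepsilon^r\Bbbk_{\tt d-2}\dd h=\Bbbk_{\tt d-2}(r)h(r)-\Bbbk_{\tt d-2}(\varepsilon)h(\varepsilon)-\widehat{\tt d}\int_\varepsilon^r\frac{h(t)}{t^{\tt d-1}}\dd t,\qquad 0<\varepsilon<r.
\]
All the assertions then come out of this identity by letting $\varepsilon\to0^+$.

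For \eqref{{se:fcc}0}$\,\Rightarrow\,$\eqref{es:00l} and $\int_0^r\Bbbk_{\tt d-2}\dd h>-\infty$: if $h(0)=c>0$, then $h\geq c$ on $(0,r]$ by monotonicity, hence $\int_0^r\frac{h(t)}{t^{\tt d-1}}\dd t\geq c\int_0^r t^{1-\tt d}\dd t=+\infty$ (since $\Bbbk_{\tt d-2}(0^+)=-\infty$), contradicting \eqref{{se:fcc}0}; so $h(0)=0$. For the second limit in \eqref{es:00l}, monotonicity of $h$ gives, for $0<s<t$, that $h(s)\bigl(\Bbbk_{\tt d-2}(t)-\Bbbk_{\tt d-2}(s)\bigr)\leq\widehat{\tt d}\int_s^t\frac{h(\tau)}{\tau^{\tt d-1}}\dd\tau\leq\widehat{\tt d}\int_0^t\frac{h(\tau)}{\tau^{\tt d-1}}\dd\tau$; letting $s\to0^+$ (so $h(s)\to0$) yields $\limsup_{s\to0^+}\bigl(-h(s)\Bbbk_{\tt d-2}(s)\bigr)\leq\widehat{\tt d}\int_0^t\frac{h(\tau)}{\tau^{\tt d-1}}\dd\tau$, and then $t\to0^+$ sends the right-hand side to $0$ by \eqref{{se:fcc}0}; since $-h(s)\Bbbk_{\tt d-2}(s)\geq0$ for $s<1$, this forces $h(s)\Bbbk_{\tt d-2}(s)\to0$. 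Passing to the limit $\varepsilon\to0^+$ in the integration-by-parts identity, using $\Bbbk_{\tt d-2}(\varepsilon)h(\varepsilon)\to0$ and \eqref{{se:fcc}0}, gives $\int_0^r\Bbbk_{\tt d-2}\dd h=\Bbbk_{\tt d-2}(r)h(r)-\widehat{\tt d}\int_0^r\frac{h(t)}{t^{\tt d-1}}\dd t\in\RR$, in particular \eqref{ex:limf0}.

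For the converse, assume \eqref{ex:limf0} and set $g:=h-h(0)\geq0$, increasing, with $g(0^+)=0$ and $\dd g=\dd h$. Fix $r_0\in(0,\min\{r,1\})$; on $(0,r_0]$ one has $-\Bbbk_{\tt d-2}>0$ decreasing, and \eqref{ex:limf0} amounts to $\int_0^{r_0}(-\Bbbk_{\tt d-2})\dd g<+\infty$ (the contribution over $[r_0,r]$ being finite since $g$ is bounded). From $\int_0^\varepsilon(-\Bbbk_{\tt d-2})\dd g\geq\bigl(-\Bbbk_{\tt d-2}(\varepsilon)\bigr)g(\varepsilon)$ (monotonicity of $-\Bbbk_{\tt d-2}$) and convergence of the integral, the left tail tends to $0$, so $\bigl(h(\varepsilon)-h(0)\bigr)\Bbbk_{\tt d-2}(\varepsilon)\to0$ --- the second relation of \eqref{es:00l0}, the first being automatic. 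Substituting $g$ for $h$ in the integration-by-parts identity on $[\varepsilon,r_0]$ and letting $\varepsilon\to0^+$ gives $\widehat{\tt d}\int_0^{r_0}\frac{g(t)}{t^{\tt d-1}}\dd t\leq\int_0^{r_0}(-\Bbbk_{\tt d-2})\dd g<+\infty$, whence \eqref{f-f0}. Finally, under \emph{either} \eqref{{se:fcc}0} or \eqref{ex:limf0}, apply the integration-by-parts identity to $g=h-h(0)$ (with $\dd g=\dd h$, and $g(\varepsilon)\Bbbk_{\tt d-2}(\varepsilon)\to0$ as shown --- in the first case because then $h(0)=0$) and let $\varepsilon\to0^+$; since $g(r)\Bbbk_{\tt d-2}(r)=\Bbbk_{\tt d-2}(r)\bigl(h(r)-h(0)\bigr)=\Bbbk_{\tt d-2}(r)\int_0^r\dd h$, the identity rearranges to exactly \eqref{in:poch0}. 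And \eqref{hHmd} is immediate from monotonicity: $\widehat{\tt d}\int_{r_0}^r\frac{h(t)}{t^{\tt d-1}}\dd t=\int_{r_0}^r h\dd\Bbbk_{\tt d-2}\leq h(r)\bigl(\Bbbk_{\tt d-2}(r)-\Bbbk_{\tt d-2}(r_0)\bigr)$, after adding $\widehat{\tt d}\int_0^{r_0}\frac{h(t)}{t^{\tt d-1}}\dd t$ to both sides.

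The only real obstacle is the endpoint bookkeeping at $t=0$: establishing that $h(\varepsilon)\Bbbk_{\tt d-2}(\varepsilon)$, resp.\ $\bigl(h(\varepsilon)-h(0)\bigr)\Bbbk_{\tt d-2}(\varepsilon)$, vanishes, and being disciplined about invoking Stieltjes integration by parts only on subintervals $[\varepsilon,r]$ bounded away from $0$ before passing to the limit, so that no convergence or jump phenomenon interferes. Beyond the density identity $\Bbbk_{\tt d-2}'=\widehat{\tt d}\,t^{1-\tt d}$ and elementary monotone comparisons, nothing further is needed.
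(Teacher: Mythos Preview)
Your argument is correct. The paper itself does not prove this lemma at all: it is quoted verbatim from \cite[Proposition~2.2]{KhaShm19} and used as a black box in the proof of the main theorem, so there is no ``paper's own proof'' to compare against. Your self-contained derivation via Stieltjes integration by parts on $[\varepsilon,r]$ together with the density relation $\dd\Bbbk_{\tt d-2}(t)=\widehat{\tt d}\,t^{1-\tt d}\dd t$ and monotone comparison is exactly the standard route to such a result, and all the endpoint limits at $t=0^+$ are handled cleanly.
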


Также несколько раз будет использована 

\begin{lemma}\label{lemmu} При условии  \eqref{k0CR}  мера $\mu$ конечна,
\begin{equation}\label{NyR}
\sup_{y\in \RR^{\tt d}}{\sf N}_y^{\mu}(t)
= \sup_{y\in \overline B(r)}{\sf N}_y^{\mu}(t) <+\infty \quad\text{при  любом   $t\in \RR^+\setminus 0$,}
\end{equation} 
потенциал ${\sf pt}_{\mu}$ ограничен снизу на всём $\RR^{\tt d}$, 
а для любого борелевского полярного множества его $\mu$-мера равна нулю. 
\end{lemma}
\begin{proof}[леммы \ref{lemmu}] Из  \eqref{k0CR} по определению \eqref{Nymu}   $\mu$-меры шаровых слоев  $B_y(r_0)\setminus B_y\bigl(\min\{R-r,r_0\}/2\bigr)$ равномерно ограничены при $y$, пробегающем $\overline B(R)$. Такие шаровые слои покрывают {\it компакт\/} $\overline B(r)$, откуда мера $\mu$ конечна.  Из  неравенства \eqref{hHmd} леммы  \ref{prok} с функция $h:=\mu_y^{\rad}$ получаем 
\begin{equation*}
{\sf N}_y^{\mu}(t)\leq \mu(\RR^{\tt d}) 
\bigl(\Bbbk_{\tt d-2}(t)-\Bbbk_{\tt d-2}(r_0)\bigr)+{\sf N}_y^{\mu}(r_0)
\quad\text{для любых  $y\in \RR^{\tt d}$ и $t\geq r_0$}. 
\end{equation*}
Применение к обеим частям этого неравенства точной верхний грани 
по всем $y\in \overline B(r)$ даёт последнее строгое неравенство $<+\infty$ из \eqref{NyR}. 
Для любой точки  $y$ вне шара $\overline B(r)$ по  неравенству треугольника  пересечение $\overline B_y(t)\cap \overline B(r)$ содержится в шаре $\overline B_{y'}(t)$ с центром  $y':=ry/|y| \in \overline B(r)$, откуда  по определению 
\eqref{Nymu} ${\sf N}_y^{\mu}(t)\leq {\sf N}_{y'}^{\mu}(t)$ при $|y|>r$ и получаем первое равенство в  \eqref{NyR}.

Из \eqref{k0CR}  по  лемме  \ref{prok} с $h:=\mu_y^{\rad}$ из существования пределов \eqref{es:00l}
\begin{equation}\label{muh0?}
\lim_{0<t\to 0} \mu_y^{\rad}(t)=0,
\quad \lim_{0<t\to 0} \mu_y^{\rad}(t)\Bbbk_{\tt d-2}( t)=0 
\end{equation}
и равенства \eqref{in:poch0} для любого $R\in \RR^+\setminus 0$  по определению 
\eqref{Nymu} получаем 
\begin{multline*}
{\sf N}_y^{\mu}(R)\overset{\eqref{Nymu}}{=}\widehat{\tt d}\int_0^R \frac{\mu_y^{\rad}(t)}{t^{\tt d-1}}\dd t
\overset{\eqref{in:poch0}}{=} \int_{0}^{R}\bigl(\Bbbk_{\tt d-2}(R)-\Bbbk_{\tt d-2}(t)\bigr)\dd \mu_y^{\rad}(t)
\\=\mu^{\rad}(R)\Bbbk_{\tt d-2}(R)- \int_{\overline B_y(R)} \Bbbk_{\tt d-2}\bigl(|x-y|\bigr)\dd \mu(x)
\quad\text{для всех $y\in \RR^{\tt d}$.}
\end{multline*}
Но при выборе $R\geq 2r$ шар $\overline B_y(R)$ содержит в себе шар $\overline B(r)$ и тогда 
\begin{equation*}
{\sf N}_y^{\mu}(R)\overset{\eqref{Ukd}}{=}
\mu^{\rad}(R)\Bbbk_{\tt d-2}(R) -{\sf pt}_{\mu}(y)\quad\text{для всех $y\in \RR^{\tt d}$ и $R\geq 2r$}.
\end{equation*}
Отсюда для конечной меры $\mu$, удовлетворяющей  \eqref{Ukd},  получаем 
\begin{equation*}
\inf_{y\in \RR^{\tt d}} {\sf pt}_{\mu}(y)>-\infty, 
\end{equation*}
и потенциал ${\sf pt}_{\mu}$ ограничен снизу на всём $\RR^{\tt d}$. 
Тогда  {\it интеграл энергии\/} 
\begin{equation*}
I[\mu]:=\int_{\RR^{\tt d}}{\sf pt}_{\mu}\dd \mu
\end{equation*}
конечен и  $\mu$ --- {\it мера конечной энергии,\/} а для таких мер мера любого борелевского полярного множества равна нулю \cite[теорема 3.2.3]{Rans}, \cite[теорема II.2]{Helms}. 
\end{proof}

Пусть $u\not\equiv -\infty$ и $v\not\equiv -\infty$  --- пара субгармонических функций на $\overline B(R)$, представляющих $U=u-v$. Значения этой разности определены и конечны в каждой точке $x\in \overline B(R)\setminus E$  вне 
 {\it полярного борелевского множества\/}
\begin{equation}\label{E}
E=\bigl\{x\in B(R)\bigm| u(x)=-\infty\bigr\}\bigcup \bigl\{x\in B(R)\bigm| v(x)=-\infty\bigr\}
\end{equation}  
  На $\overline B(r)\setminus E$ функция $U^+$ всюду определена как положительная часть разности  $u-v$  полунепрерывных сверху функций $u$ и $v$ со значениями в $\RR$ и является измеримой по сужению меры Бореля $\mu$ на  $\overline B(r)\setminus E$, а значит и интегрируемой  по этому сужению. 
В то же время из заключительной части леммы \ref{lemmu} следует, что $\mu(E)=0$, откуда получаем   $\mu$-интегрируемость функции $U^+$ на всём шаре
$\overline B(r)$. Её $\mu$-суммируемость будет следовать из неравенства \eqref{{UR}T}, поскольку правая часть в этом неравенстве по соотношениям \eqref{NyR} из леммы \ref{lemmu} конечна. 

Переходим к доказательству неравенства \eqref{UR} для функции $U=u-v$.

Применяя формулу Пуассона\,--\,Йенсена  в шаре  $B(R)$ \cite[(3.7.3)]{HK} к $u(x)$ и $v(x)$
в каждой точке  $x\in \overline B (r)$, а затем вычитая одно равенство из другого
 в каждой точке   $x\in \overline B(R)$, лежащей вне $E$ из \eqref{E}, получаем равенство 
\begin{multline}\label{Ux}
U(x)=\frac{1}{{\sf s}_{\tt d-1}}\int_{\partial \overline B(R)} \frac{R^2-|x|^2}{R|y-x|^{\tt d}}U(y)\dd \sigma_{\tt d-1}^R (y)\\
-\int_{B(R)}\Biggl( \Bbbk_{\tt d-2}\biggl(\Bigl|\frac{R}{|y|}y-\frac{|y|}{R}x\Bigr| \biggr)-\Bbbk_{\tt d-2}\bigl(|y-x|\bigr)\Biggr)\dd \varDelta_U(y),\;  x\in \overline B(r)\setminus E. 
\end{multline}

Здесь для {\it положительного ядра Пуассона\/} \cite[1.5.4]{HK} имеем 
\begin{equation*}
\frac{1}{{\sf s}_{\tt d-1}} \frac{R^2-|x|^2}{R|y-x|^{\tt d}}\leq 
\frac{1}{{\sf s}_{\tt d-1}} \frac{R+r}{R(R-r)^{\tt d-1}}
\quad\text{при всех $y\in \partial \overline B(R)$ и $x\in \overline B(r)$},
\end{equation*}
а для {\it положительной функции Грина\/}  \cite[теорема 1.10]{HK} ---
\begin{multline*}
\Bbbk_{\tt d-2}\biggl(\Bigl|\frac{R}{|y|}y-\frac{|y|}{R}x\Bigr| \biggr)-\Bbbk_{\tt d-2}\bigl(|y-x|\bigr)
\\
\leq \Bbbk_{\tt d-2}(R+r)-\Bbbk_{\tt d-2}\bigl(|y-x|\bigr)
\quad \text{при всех $y\in B(R)$ и  $x\in \overline B(r)$}.
\end{multline*}
Таким образом, из \eqref{Ux} следует, что при всех  $x\overset{\eqref{E}}{\in} \overline B(r)\setminus E$ 
\begin{multline*}
U^+(x)\leq \frac{R^{\tt d-2}(R+r)}{(R-r)^{\tt d-1}}{\sf C}_{U^+}(R)
+\int_{B(R)}\Bigl(\Bbbk_{\tt d-2}(R+r)-\Bbbk_{\tt d-2}\bigl(|y-x|\bigr)\Bigr) \dd\varDelta_U^-(y).
\end{multline*}
Теперь ввиду $\mu$-интегрируемости функции $U^+$ можем интегрировать  по мере $\mu$ это неравенство и использовать теорему Фубини  о повторных интегралах: 
\begin{multline}\label{RTD}
\int_{\overline B(r)}U^+\dd \mu
\leq \int_{\overline B(r)}\frac{R^{\tt d-2}(R+r)}{(R-r)^{\tt d-1}}{\sf C}_{U^+}(R)\dd \mu(x)
\\+\int_{\overline B(r)}\int_{B(R)}\Bigl(\Bbbk_{\tt d-2}(R+r)-\Bbbk\bigl(|y-x|\bigr)\Bigr) \dd\varDelta_U^-(y)\dd \mu (x)\\
=\frac{R^{\tt d-2}(R+r)}{(R-r)^{\tt d-1}}{\sf C}_{U^+}(R)\mu^{\rad}(r)
\\+\int_{B(R)}\int_{\overline B(r)}\Bigl(\Bbbk_{\tt d-2}(R+r)-\Bbbk\bigl(|y-x|\bigr)\Bigr) \dd \mu (x)\dd\varDelta_U^-(y)
\\
\overset{\eqref{muyr}}{=}
\frac{R^{\tt d-2}(R+r)}{(R-r)^{\tt d-1}}{\sf C}_{U^+}(R)\mu^{\rad}(r)\\
+\int_{B(R)}\int_0^{R+r}\Bigl(\Bbbk_{\tt d-2}(R+r)-\Bbbk_{\tt d-2}(t)\Bigr) \dd \mu_y^{\rad} (t)\dd\varDelta_U^-(y)\\
\overset{\eqref{{rT}N}}{\leq} \frac{R^{\tt d-2}(R+r)}{(R-r)^{\tt d-1}}{\boldsymbol T}_{U}(r,R)\mu^{\rad}(r)
\\
+(\varDelta_U^-)^{\rad}(R)
\sup_{y\in B(R)}
\int_0^{R+r}\Bigl(\Bbbk_{\tt d-2}(R+r)-\Bbbk_{\tt d-2}(t)\Bigr) \dd \mu_y^{\rad} (t),
\end{multline}
где при последнем переходе использовано и неравенство 
 ${\sf C}_{U^+}(R)\overset{\eqref{{rT}N}}{\leq} {\boldsymbol{T}_U}(r, R)$, вытекающее из 
 определения разностной характеристики Неванлинны \eqref{{rT}N}.
Для последнего  интеграла Римана\,--\,Стилтьеса,  используя равенство \eqref{in:poch0} 
леммы  \ref{prok} с $h:=\mu_y^{\rad}$ и определение \eqref{Nymu} с очевидным 
 $\mu_y^{\rad} (r)\leq \mu^{\rad} (r)$, получаем 
\begin{multline}\label{Bbbkh}
\int_0^{R+r}\Bigl(\Bbbk_{\tt d-2}(R+r)-\Bbbk_{\tt d-2}(t)\Bigr) \dd \mu_y^{\rad} (t)
\overset{\eqref{in:poch0}}{=}\widehat{\tt d}\int_0^{R+r}\frac{\mu_y^{\rad} (t)}{t^{\tt d-1}}\dd t
\\
\overset{\eqref{hHmd}}{\leq}\Bigl(\Bbbk_{\tt d-2}(R+r)-\Bbbk_{\tt d-2}(r)\Bigr) \mu_y^{\rad} (r)+
\widehat{\tt d}\int_0^{r}\frac{\mu_y^{\rad} (t)}{t^{\tt d-1}}\dd t\\
\leq \Bigl(\Bbbk_{\tt d-2}(R+r)-\Bbbk_{\tt d-2}(r)\Bigr) \mu^{\rad} (r)+
{\sf N}_y^{\mu}(r). 
\end{multline}
Все установленные выше соотношения остаются справедливыми и для любого промежуточного $R_*>r$, не превышающего $R$:
\begin{equation}\label{R*}
r<R_*<R , \quad \overline B(r)\subset B(R_*)\subset \overline B(R).  
\end{equation}
Таким образом,  \eqref{RTD} с учётом  \eqref{Bbbkh}  можем записать как 
\begin{subequations}\label{R**}
\begin{align}
&\int_{\overline B(r)}U^+\dd \mu
\leq \frac{R_*^{\tt d-2}(R_*+r)}{(R_*-r)^{\tt d-1}}{\boldsymbol{T}_U}(r,R)\mu^{\rad}(r)
\tag{\ref{R**}T}\label{R**T}
\\
&
\hspace{-1mm}
+(\varDelta_U^-)^{\rad}( R_*)\biggl(
\mu^{\rad}(r)\bigl(\Bbbk_{\tt d-2}(R_*+r)-\Bbbk_{\tt d-2}(r)\bigr)+
\sup_{y\in \overline B(r)}{\sf N}_y^{\mu}(r) 
\biggr).
\tag{\ref{R**}$\varDelta$}\label{R**vD}
\end{align}
\end{subequations}
где использовано равенство 
$$\sup\limits_{y\in B(R)}{\sf N}_y^{\mu}(r)=\sup\limits _{y\in \overline B(r)}{\sf N}_y^{\mu}(r),$$
вытекающее из равенства в \eqref{NyR} в лемме \ref{lemmu}.

\begin{lemma}\label{lemd} Пусть   $\varDelta$ --- мера Бореля на $\overline B(R)\subset \RR^{\tt d}$ и\/ $0<R_*<R$.
 Тогда
\begin{equation}\label{1}
\varDelta^{\rad}(R_*)
\leq 
\frac{{\sf N}_{\varDelta} (R_*,R)}{\Bbbk_{\tt d-2}(R)-\Bbbk_{\tt d-2}(R_*)} 
\quad\text{при любых ${\tt d}\geq 2$.}
\end{equation}
\end{lemma}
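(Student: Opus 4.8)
The plan is to combine the monotonicity of the radial mass function of $\varDelta$ with an elementary evaluation of a power integral, the latter turning out to equal exactly $\Bbbk_{\tt d-2}(R)-\Bbbk_{\tt d-2}(R_*)$.

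First I would note that $t\mapsto \varDelta^{\rad}(t)=\varDelta\bigl(\overline B(t)\bigr)$ is nondecreasing on $\RR^+$, so that $\varDelta^{\rad}(t)\geq \varDelta^{\rad}(R_*)$ for every $t\in[R_*,R]$. Substituting this pointwise bound into the definition \eqref{Ntt} and using monotonicity of the integral (all quantities live in $\overline{\RR}^+$, so no finiteness hypothesis is needed), I obtain
\[
{\sf N}_{\varDelta}(R_*,R)={\widehat{\tt d}}\int_{R_*}^{R}\frac{\varDelta^{\rad}(t)}{t^{\tt d-1}}\dd t
\geq \varDelta^{\rad}(R_*)\,{\widehat{\tt d}}\int_{R_*}^{R}\frac{\dd t}{t^{\tt d-1}}.
\]

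Next I would evaluate $I:={\widehat{\tt d}}\int_{R_*}^{R}t^{1-\tt d}\dd t$, splitting only at ${\tt d}=2$ versus ${\tt d}\geq 3$ (recall from \eqref{kd0} that $\widehat{\tt d}=1$ in the first case and $\widehat{\tt d}={\tt d}-2$ in the second, since ${\tt d}-2\geq 1$ there). For ${\tt d}=2$ one gets $I=\int_{R_*}^{R}t^{-1}\dd t=\ln R-\ln R_*$. For ${\tt d}\geq 3$ one gets $I=({\tt d}-2)\int_{R_*}^{R}t^{1-\tt d}\dd t=({\tt d}-2)\,\dfrac{R_*^{2-\tt d}-R^{2-\tt d}}{{\tt d}-2}=R_*^{2-\tt d}-R^{2-\tt d}$. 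By \eqref{kKd-2} both expressions are precisely $\Bbbk_{\tt d-2}(R)-\Bbbk_{\tt d-2}(R_*)$, a strictly positive number because $\Bbbk_{\tt d-2}$ is increasing on $\RR^+$ and $R_*<R$. Dividing the displayed inequality by $I>0$ yields \eqref{1}.

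There is essentially no obstacle here; the only care required is keeping track of the two-branch definitions of $\widehat{\tt d}$ and $\Bbbk_{\tt d-2}$ so as to see that $I=\Bbbk_{\tt d-2}(R)-\Bbbk_{\tt d-2}(R_*)$ holds uniformly in ${\tt d}\geq 2$, and checking that this quantity is strictly positive so that the final division is legitimate.
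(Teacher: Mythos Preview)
Your argument is correct and coincides with the paper's proof: both use the monotonicity of $\varDelta^{\rad}$ to bound ${\sf N}_{\varDelta}(R_*,R)$ from below by $\varDelta^{\rad}(R_*)\,\widehat{\tt d}\int_{R_*}^{R}t^{1-\tt d}\dd t$, then identify the last integral with $\Bbbk_{\tt d-2}(R)-\Bbbk_{\tt d-2}(R_*)$ and divide. The paper merely compresses all of this into a single displayed ratio, while you spell out the case split on ${\tt d}$ explicitly.
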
 
\begin{proof}[леммы \ref{lemd}] В силу возрастания считающей функции $\varDelta^{\rad}$
\begin{equation*}
\varDelta^{\rad}(R_*)\leq \int_{R_*}^{R}\frac{\varDelta^{\rad}(t)}{t^{\tt d-1}}\dd t\biggm/
\int_{R_*}^{R} \frac{\dd t}{t^{\tt d-1}}
\overset{\eqref{Ntt}}{=}
\frac{{\sf N}_{\varDelta} (R_*,R)}{\Bbbk_{\tt d-2}(R)-\Bbbk_{\tt d-2}(R_*)}.
\end{equation*}
\end{proof}
По лемме \ref{lemd} применительно к  $(\varDelta_U^-)^{\rad}(R_*)$ с учётом неравенств 
$$
{\sf N}_{\varDelta_U^-}(R_*,R)\overset{\eqref{Ntt}}{\leq} {\sf N}_{\varDelta_U^-}(r,R)\overset{\eqref{{rT}N}}{\leq} {\boldsymbol{T}}_U(r,R)
$$
для части  \eqref{R**vD}  неравенства \eqref{R**} получаем 
\begin{multline}\label{kkk}
\eqref{R**vD} \leq {\boldsymbol{T}}_U(r,R)  \frac{1}{\Bbbk_{\tt d-2}(R)-\Bbbk_{\tt d-2}(R_*)}
\\
\times\biggl(\mu^{\rad}(r)\Bigl(\Bbbk_{\tt d-2}(R_*+r)-\Bbbk_{\tt d-2}(r)\Bigr)
+\sup_{y\in \overline B(r)}{\sf N}_y^{\mu}(r)\biggr)
\end{multline}

Рассмотрим сначала отдельно 
\paragraph{Случай ${\tt d=2}$} 
В \eqref{R*} выберем $R_*:=\sqrt{rR} $ как среднее геометрическое чисел $r$ и $R$.
Для части \eqref{R**vD} неравенства \eqref{R**} из  \eqref{kkk}  получаем  
\begin{multline*}
\eqref{R**vD}\leq {\boldsymbol{T}}_U(r,R)  
\frac{1}{\ln \sqrt{R/r}}\biggl(
\mu^{\rad}(r)\ln \frac{\sqrt{Rr}+r}{r}+
\sup_{z\in \overline D(r)}{\sf N}_z^{\mu}(r) 
\biggr)\\
={\boldsymbol{T}}_U(r,R)  
\biggl(\mu^{\rad}(r)+\mu^{\rad}(r)\frac{\ln\bigl(1+\sqrt{r/R}\bigr)}{\ln \sqrt{R/r}}+
\frac{2}{\ln (R/r)}\sup_{z\in \overline D(r)}{\sf N}_z^{\mu}(r) 
\biggr)\\
\leq 
{\boldsymbol{T}}_U(r,R)  
\biggl(\mu^{\rad}(r)+\mu^{\rad}(r)\frac{2\ln 2}{\ln (R/r)}+
\frac{2}{\ln (R/r)}
\sup_{z\in \overline D(r)}{\sf N}_z^{\mu}(r) 
\biggr).
\end{multline*} 
Отсюда, используя неравенства 
$$
\frac{1}{\ln (R/r)}=1\biggm/ \int_r^R\frac{\dd t}{t}\leq \frac{R}{R-r}\geq 1 \quad\text{при $0<r<R$},
$$
 можем продолжить предыдущие неравенства как 
\begin{multline*}
\eqref{R**vD} \leq {\boldsymbol{T}}_U(r,R)  
\biggl(\frac{R}{R-r}\mu^{\rad}(r) +\frac{R}{R-r}\mu^{\rad}(r) 2\ln  2+2\frac{R}{R-r}\sup_{z\in \overline D(r)}{\sf N}_z^{\mu}(r) 
\biggr)\\
\leq \frac{R}{R-r}{\boldsymbol{T}}_U(r,R)  
\biggl((1+2\ln 2)\mu^{\rad}(r) +2\sup_{z\in \overline D(r)}{\sf N}_z^{\mu}(r)\biggr),
\end{multline*}
откуда согласно   \eqref{R**}  следует 
\begin{multline*}
\int_{\overline B(r)}U^+\dd \mu
\leq {\boldsymbol{T}_U}(r,R) \biggl(
\frac{\sqrt R+\sqrt r}{\sqrt R-\sqrt r}\mu^{\rad}(r)
\\
+\frac{R}{R-r}
\Bigl((1+2\ln 2)\mu^{\rad}(r)+2\sup_{z\in D(R)}{\sf N}_z^{\mu}(r) \Bigr)\biggr)\\
\leq {\boldsymbol{T}_U}(r,R) \Biggl(
\biggl(2\frac{R+r}{ R- r}
+\frac{(1+2\ln 2)R}{R-r}\biggr)\mu^{\rad}(r)+\frac{2R}{R-r}\sup_{z\in D(R)}{\sf N}_z^{\mu}(r) \Bigr)\Biggr)
\\
\leq 5\frac{R+r}{R-r}\biggl(
\mu^{\rad}(r)+\sup_{z\in D(R)}{\sf N}_z^{\mu}(r) \biggr),
\end{multline*}
что устанавливает требуемое неравенство \eqref{{UR}T} с выписанным в \eqref{{UR}A} 
 $$
A_{\tt 2}(r,R):=5\frac{R+r}{R-r}\quad \text{ при ${\tt d=2}$.}
$$

\paragraph{Случай ${\tt d}>2$} 
Из очевидного $$
\Bbbk_{\tt d-2}(R_*+r)-\Bbbk_{\tt d-2}(r)\leq \frac{1}{r^{\tt d-2}}
$$ для  ${\tt d}>2$
и элементарного 
\begin{equation*}
{\Bbbk_{\tt d-2}(R)-\Bbbk_{\tt d-2}(R_*)}=\frac{1}{{\tt d}-2}\int_{R_*}^R\frac{\dd t}{t^{\tt d-1}}\geq 
\frac{1}{{\tt d}-2}\frac{R-R_*}{R^{{\tt d}-1}} 
\end{equation*} 
при выборе $R_*:=\frac12(R+r) $ как среднего арифметического чисел $r$ и $R$
для части \eqref{R**vD} неравенства \eqref{R**} из  \eqref{kkk}  получаем 
\begin{equation*}\label{d>2}
\eqref{R**vD}\leq {\boldsymbol{T}}_U(r,R)  
\frac{2({\tt d}-2)R^{\tt d-1}}{R-r}\biggl(
\mu^{\rad}(r)\frac{1}{r^{\tt d-2}}+
\sup_{z\in \overline B(r)}{\sf N}_z^{\mu}(r) \biggr)\quad \text{при ${\tt d>2}$},
\end{equation*}
откуда согласно   \eqref{R**}  следует 
\begin{multline*}
\int_{\overline B(r)}U^+\dd \mu
\leq 2\Bigl(\frac{R+r}{R-r}\Bigr)^{\tt d-1}
{\boldsymbol{T}_U}(r,R)\mu^{\rad}(r)
\\
+2({\tt d}-2){\boldsymbol{T}}_U(r,R)  
\frac{R^{\tt d-1}}{R-r}\biggl(
\mu^{\rad}(r)\frac{1}{r^{\tt d-2}}+
\sup_{z\in \overline B(r)}{\sf N}_z^{\mu}(r) \biggr)
\\
\leq 2\Bigl(\frac{R+r}{R-r}\Bigr)^{\tt d-1}{\boldsymbol{T}}_U(r,R)  
\biggl(\Bigl(1+({\tt d}-2)(R-r)^{\tt d-2}\Bigr)\mu^{\rad}(r)\max\{1, r^{2-\tt d}\}
\\+({\tt d}-2)(R-r)^{\tt d-2}
\sup_{z\in \overline B(r)}{\sf N}_z^{\mu}(r) \biggr)\\
\leq 
2\Bigl(\frac{R+r}{R-r}\Bigr)^{\tt d-1} {\boldsymbol{T}}_U(r,R) \cdot 
2({\tt d}-2)\max\Bigl\{1, (R-r)^{{\tt d}-2}\Bigr\}\\\biggl(\mu^{\rad}(r)\max\{1, r^{2-\tt d}\}+
\sup_{z\in \overline B(r)}{\sf N}_z^{\mu}(r) \biggr),
\end{multline*}
что даёт  требуемое неравенство \eqref{{UR}T} с $A_{\tt d}(r,R)$ из \eqref{{UR}A}  при ${\tt d}>2$.
Правая часть неравенства \eqref{{UR}T} конечна, если учесть \eqref{NyR} из леммы \ref{lemmu}. 

\paragraph{{\rm \ref{UII}}$\overset{?}{\Longrightarrow}${\rm \ref{UIII}}}
Для какого-нибудь фиксированного числа $R>r$ рассмотрим неравенство \eqref{UR} для гармонической функции $U\equiv 1$, для которой, очевидно, ${\boldsymbol{T}}_1(r,R)=1$.  Из конечности правой части в  \eqref{{UR}T} при  выборе $U\equiv 1$ следует существование числа $M>0$, не зависящего от $U\in \dsh_*\bigl(\overline B(R)\bigr)$, для которого  
\begin{equation*}
A_{\tt d}(r,R)\biggl(\mu^{\rad}(r)\max\{1, r^{2-\tt d}\}+\sup_{y\in \overline B(R)}
{\sf N}_y^{\mu}(r)\biggr)\leq M<+\infty.
\end{equation*}
Отсюда для любой функции $U\in \dsh_*\bigl(\overline B(R)\bigr)$
при  $ {\boldsymbol  T}_U( r, R)\leq T$ вновь из неравенства \eqref{{UR}T} получаем 
$$
\int_{\overline B(r)} U^+\dd \mu  \leq MT<+\infty, 
$$
где произведение $MT$ не зависит от $U$, что доказывает   \eqref{RM}.  

\paragraph{{\rm \ref{UIII}}$\overset{?}{\Longrightarrow}${\rm \ref{UIV}}}
Соотношение \eqref{RM} справедливо для любого $T>0$, поскольку умножение функции $U\in \dsh_*\bigl(\overline B(R)\bigr)$ на положительное число умножает на него и разностную характеристику  Неванлинны. 
Кроме того, из соотношения \eqref{RM} утверждения \ref{UIII} гармоническая функция, тождественно равная единице, $\mu$-суммируема, откуда  мера $\mu$ конечна.  

Для  $y\in \RR^{\tt d}$ 
рассмотрим {\it субгармонические\/} функции 
\begin{equation}\label{kd}
k_y\colon x\overset{\eqref{{kK}s}}{\underset{x \in \RR^{\tt d}}{\longmapsto}}
\Bbbk_{\tt d-2} \bigl(|x-y|\bigr)\quad\text{с мерой Рисса $\varDelta_{k_y}=\updelta_y$}
 \end{equation}
где $\updelta_y$ ---  {\it мера Дирака в точке $y$,\/} т.е. вероятностная  мера с носителем  $\supp \updelta_y=\{y\}$. Из   формулы Пуассона\,--\,Йенсена \eqref{Ux}, применённой к значениям функций  $U:=k_y$ в нуле сразу следует равенство
\begin{equation}\label{Cky}
{\sf C}_{k_y}(R)= \Bbbk_{\tt d-2}(R)\quad \text{при всех $y\in B(R)$, $0<R\in \RR^+$}. 
\end{equation}

Для  существующего значения $R>r$ из утверждения   \ref{UIII} рассмотрим семейство {\it супергармонических функций\/} $\{K_y\}_{y\in \RR^{\tt d}}$ на  всем $\RR^{\tt d}$, определённых как 
\begin{equation}\label{k-p}
K_y\colon x\underset{x\in \RR^{\tt d}}{\longmapsto} \Bbbk_{\tt d-2}(R+r)-\Bbbk_{\tt d-2}\bigl(|x-y|\bigr),
\end{equation}  
Если  $y$ лежит на $\overline B(r)$, 
то  функции $K_y$ {\it положительны на $\overline B(R)$} и 
\begin{equation}\label{KC}
{\sf C}_{K_y^+}(R)={{\sf C}_{K_y}}(R)\overset{\eqref{Cky}}{=}\Bbbk_{\tt d-2}(R+r)-\Bbbk_{\tt d-2}(R), 
\quad  y\in \overline B(r).
\end{equation}
При этом заряд Рисса супергармонической  функции $K_y$  противоположен мере  Дирака в точке $y$ и 
справедливы равенства 
\begin{equation*}
\varDelta_{K_y}=-\updelta_y=-\varDelta_{K_y}^- , 
\quad \varDelta_{K_y}^-(t)=
\begin{cases}
0\text{ при $t\in \bigl[0,|y|\bigr)$},\\
1\text{ при $t\in \bigl[|y|, +\infty\bigr)$}.
\end{cases}
\end{equation*} 
Отсюда при $|y|\leq r$ получаем 
\begin{equation*}
{\sf N}_{\varDelta_{K_y}^-}(r, R)=\widehat{\tt d}\int_{r}^R\frac{1}{t^{\tt d-1}}\dd t
\overset{}{=}\Bbbk_{\tt d-2}(R)-\Bbbk_{\tt d-2}(r), \quad  y\in \overline B(r),
\end{equation*}
что  с \eqref{KC} по определению разностной характеристики Неванлинны даёт 
\begin{equation}\label{TTk}
{\boldsymbol{T}_{K_y}}(r,R)={\sf C}_{K_y^+}(R)+ {\sf N}_{\varDelta_{K_y}^-}(r, R)=
\Bbbk_{\tt d-2}(R+r)-\Bbbk_{\tt d-2}(r),
\end{equation}
где правая часть строго положительна и не зависит от $y\in \overline B(r)$. Выберем число $T$ равным правой части 
\eqref{TTk}. Тогда по  соотношению \eqref{RM} утверждения \ref{UIII} с функциями $U\overset{\eqref{k-p}}{:=}K_y$
существует число $C$, для которого 
\begin{equation*}
\sup_{y\in \overline B(r)} \int_{\overline B(r)}K_y(x)\dd \mu(x)\leq C<+\infty,
\end{equation*}
а из явного вида \eqref{k-p} функции $K_y$ получаем 
$$
\inf_{y\in \overline B(r)} \int_{\overline B(r)}\Bbbk_{\tt d-2}\bigl(|x-y|\bigr)\dd \mu(x)\geq \Bbbk_{\tt d-2}(R+r)\mu^{\rad}(r)-C > -\infty.
$$
 
\paragraph{{\rm \ref{UIV}}$\overset{?}{\Longrightarrow}${\rm \ref{UI}}}
Выберем произвольное число $R>r$. 
Из ограниченности снизу потенциала ${\sf pt}_{\mu}$ на $\supp \mu\subset \overline B(r)$ 
следует его ограниченность снизу всюду на $\RR^{\tt d}$ \cite[теорема 3.1.4]{Rans}, \cite[теорема 1.10]{Landkof}, что
из представления  потенциала ${\sf pt}_{\mu}$ интегралом Римана\,--\,Стилтьеса даёт
\begin{equation}\label{kRmu}
\inf_{y\in \overline B(R)}\int_0^{+\infty}\Bbbk_{\tt d-2}(t) \dd \mu_y^{\rad}(t)>-\infty. 
\end{equation}
Но при $y\in \overline B(R)$ шар $B_y(R+r)$ включает в себя шар $\overline B(r)$, следовательно,
$\mu_y^{\rad}(t)\equiv \mu^{\rad}(r)$ при $t\geq R+r$, а верхний предел интегрирования в  \eqref{kRmu}
можно заменить на $R+r$, откуда 
\begin{equation*}
\sup_{y\in \overline B(R)}\int_0^{R+r}\Bigl( \Bbbk_{\tt d-2}(R+r)- \Bbbk_{\tt d-2}(t) \Bigr)\dd \mu_y^{\rad}(t)<+\infty. 
\end{equation*} 
и по лемме \ref{prok} с $h:=\mu_y^{\rad}$  в условиях \eqref{ex:limf0} из равенств 
\eqref{es:00l0} и \eqref{in:poch0} получаем 
\begin{equation*}
\sup_{y\in \overline B(R)} \widehat{\tt d}\int_0^{R+r}\frac{\mu_y^{\rad}(t)}{t^{\tt d-1}} \dd t <+\infty.
\end{equation*}
Это 
при   $r_0:=R+r$ даёт требуемое в утверждении \ref{UI} соотношение \eqref{k0CR}.

\paragraph{{\rm \ref{UI}}$\overset{?}{\Longrightarrow}${\rm \ref{UV}}}
 Конечность меры $\mu$ --- из леммы \ref{lemmu}, а \eqref{k0CRr} --- частный случай \eqref{k0CR}. 

\paragraph{{\rm \ref{UV}}$\overset{?}{\Longrightarrow}${\rm \ref{UIV}}}
Положим $R:=r_0+2r>2r>r$. 
В силу конечности меры $\mu$ 
\begin{equation}\label{k0CRr0}
\sup_{y\in \supp \mu}{\sf N}_y^{\mu}(R) \leq \sup_{y\in \supp \mu}{\sf N}_y^{\mu}(r_0)+
\mu^{\rad}(r)\int_{r_0}^R\frac{\dd t}{t^{\tt d-1}} <+\infty.
\end{equation}
Отсюда, в частности, следует условие \eqref{{se:fcc}0} леммы \ref{prok}
  для $h:=\mu_y^{\rad}$  при каждом  $y\in \supp \mu$. Следовательно, по равенствам \eqref{es:00l} и \eqref{in:poch0} имеем 
\begin{equation*}
{\sf N}_y^{\mu}(R)={\widehat{\tt d}}\int_{0}^{R}\frac{\mu_y^{\rad}(t)}{t^{\tt d-1}}\dd t\overset{\eqref{in:poch0}}{=}
\int_{0}^{R}\bigl(\Bbbk_{\tt d-2}(R)-\Bbbk_{\tt d-2}(t)\bigr)\dd \mu_y^{\rad}(t)
\end{equation*}
и, как следствие,   
\begin{equation}\label{kinf}
\inf_{y\in \supp \mu}\int_{0}^{R}\Bbbk_{\tt d-2}(t)\dd \mu_y^{\rad}(t)\geq
-\bigl| \Bbbk_{\tt d-2}(R) \bigr| \mu^{\rad}(r) -\sup_{y\in \supp \mu}{\sf N}_y^{\mu}(R).
\end{equation}
Но при всех $y\in \supp \mu\subset \overline B(r)$ имеем  $\overline B_y(R)\supset \overline B(r)$, откуда 
$\mu_y(t)\equiv \mu_y(R)$ при всех $t\geq R$. Это даёт при всех $y\in \supp \mu$ равенства 
$$
\int_{0}^{R}\Bbbk_{\tt d-2}(t)\dd \mu_y^{\rad}(t)=\int_{0}^{+\infty}\Bbbk_{\tt d-2}(t)\dd \mu_y^{\rad}(t)
=\int_{\RR^{\tt d}}\Bbbk_{\tt d-2}\bigl(|x-y|\bigr)\dd \mu(x)={\sf pt}_{\mu}(y),
$$
что в сочетании с \eqref{kinf} и \eqref{k0CRr0} обеспечивает ограниченность снизу потенциала 
${\sf pt}_{\mu}$ на носителе $\supp \mu$ и выполнение утверждения  \ref{UIV}.
\end{proof}
\begin{remark}
Импликация \ref{UIm}$\Longrightarrow$\ref{UIIm} основного следствия из введения --- очевидное следствие 
импликации \ref{UI}$\Longrightarrow$\ref{UII} основной теоремы. 
\end{remark}

\end{fulltext}

\end{document}